\documentclass[11pt,reqno]{article}

\usepackage[usenames]{color}
\usepackage{amssymb}
\usepackage{graphicx}
\usepackage{amscd}
\usepackage{multirow}
\usepackage{epstopdf}
\usepackage{color}
\usepackage{fullpage}
\usepackage{float}
\usepackage{graphics,amsmath,amssymb}
\usepackage{amsthm}
\usepackage{amsfonts}
\usepackage{latexsym}
\usepackage{hyperref}
\usepackage{caption}
\usepackage{slashbox}
\usepackage{booktabs}

\setlength{\textwidth}{6.5in}
\setlength{\oddsidemargin}{.1in}
\setlength{\evensidemargin}{.1in}
\setlength{\topmargin}{-.5in}
\setlength{\textheight}{8.9in}

\newcommand{\cln}[2]{ \left\lceil \frac{#1}{#2} \right\rceil}
\newcommand{\flr}[2]{ \left\lfloor \frac{#1}{#2} \right\rfloor}

\theoremstyle{plain}
\newtheorem{theorem}{Theorem}[section]

\newtheorem{conjecture}[theorem]{Conjecture}

\theoremstyle{definition}

\theoremstyle{remark}

\numberwithin{equation}{section}
\numberwithin{theorem}{section}
\numberwithin{table}{section}
\numberwithin{figure}{section}

\begin{document}

\begin{center}
\vskip 1cm{\LARGE\bf
Nested Recursions with Ceiling Function Solutions\\
%June 28
}
\vskip 0.1 in
\large
Abraham Isgur\\
Vitaly Kuznetsov\\
Stephen M. Tanny\\
Department of Mathematics\\
University of Toronto\\
Toronto, Ontario M5S 2E4\\
Canada\\
\href{mailto:abraham.isgur@math.utoronto.ca}{\tt abraham.isgur@math.utoronto.ca}\,,\href{mailto:v.kuznetsov@utoronto.ca}{\tt v.kuznetsov@utoronto.ca}\,, \href{mailto:tanny@math.utoronto.ca}{\tt tanny@math.utoronto.ca}\\
\end{center}

\vskip .1 in

\begin{abstract}
Consider a nested, non-homogeneous recursion $R(n)$ defined by $$R(n) = \sum_{i=1}^k R\left(n-s_i-\sum_{j=1}^{p_i} R(n-a_{ij})\right) + \nu,$$ with $c$ initial conditions $R(1) = \xi_1 > 0,R(2)=\xi_2 > 0,
\ldots, R(c)=\xi_c > 0$, where the parameters are integers satisfying $k > 0, p_i > 0$ and $a_{ij} > 0$. We develop an algorithm to answer the following question: for an arbitrary rational number $r/q$, is there any set of values for $k, p_i, s_i, a_{ij}$ and $\nu $ such that the ceiling function $\cln{rn}{q}$ is the unique solution generated by $R(n)$ with appropriate initial conditions? We apply this algorithm to explore those ceiling functions that appear as solutions to $R(n)$. The pattern that emerges from this empirical investigation leads us to the following general result: every ceiling function of the form $\cln{n}{q}$ is the solution of infinitely many such recursions. Further, the empirical evidence suggests that the converse conjecture is true: if $\cln{rn}{q}$ is the solution generated by any recursion $R(n)$ of the form above, then $r=1$. We also use our ceiling function methodology to derive the first known connection between the recursion $R(n)$ and a natural generalization of Conway's recursion.
\end{abstract}

Keywords: nested recursion; Conolly sequence; Conway sequence; ceiling function

\section{Introduction} \label{sec:Intro}
In this paper we investigate the occurrence of ceiling function solutions to nested, non-homogeneous\footnote{Golomb \cite{Golomb1990} first solved the simplest example of such a non-homogeneous nested recursion, namely, $G(n)=G(n-G(n-1))+1$, $G(1)=1$; see also \cite{Rpaper}. In fact, all of the recursions we find with ceiling function solutions can be rewritten into an equivalent homogeneous form. We use the non-homogeneous form for two reasons; first, we do not \textit{a priori} know we will find no others, and second, it simplifies the presentation of the material in Section \ref{sec:Alg}.} recursions of the form
\begin{align}
R(n) = \sum_{i=1}^k R\left(n-s_i-\sum_{j=1}^{p_i} R(n-a_{ij})\right) + \nu
\label{eq:generalRecurrence}.
\end{align}
Unless otherwise noted, we consider only $n > 0$. The parameters in (\ref{eq:generalRecurrence}) are all integers satisfying $k, p_i$ and $a_{ij}>0$. Assume $c$ initial conditions $R(1) = \xi_1,R(2)=\xi_2,
\ldots, R(c)=\xi_c$, with all $\xi_i>0$. Following \cite{ConollyLike}, where the homogeneous version of (\ref{eq:generalRecurrence}) is first introduced in full generality, we denote such recursions by
\begin{align}
\label{eq:generalNotation}
\langle s_1;a_{11},a_{12},\ldots,a_{1p_1} : s_2;a_{21},a_{22},\ldots,a_{2p_2} :
  \cdots :s_k;a_{k1},a_{k2},\ldots,a_{kp_k}|\nu \rangle [\xi_1,\xi_2,\ldots,\xi_c ].
\end{align}

In \cite{ConollyLike}, initial results concerning ceiling function solutions of (\ref{eq:generalRecurrence}) are obtained in the special case $k=2$, $p_1=p_2=p$, and $\nu=0$; specifically, necessary and sufficient conditions are proven on the $s_i$ and $a_{ij}$ for $\cln{n}{2p}$ to solve (\ref{eq:generalRecurrence}) (see \cite{ConollyLike}, Section 5, Theorem 5.2).

Here we focus on a far more general question: can we identify those values of the parameters in (\ref{eq:generalRecurrence}) for which the solution is any ceiling function of the form $\cln{rn}{q}$, where $r/q$ is an arbitrary rational number?  From Theorem 2.1 in \cite{ConollyLike} it is immediate that $\frac{r}{q} = \frac{k-1}{\sum_{i=1}^{k}p_i}$ is a necessary condition for such a ceiling function solution to (\ref{eq:generalRecurrence})\footnote{Note that while the proof of Theorem 2.1 in \cite{ConollyLike} refers to a homogeneous recursion, it is evident that the same argument holds for a non-homogeneous recursion. Since $\frac{\nu}{n} \rightarrow  0$ as $n \rightarrow \infty$, the non-homogeneous term drops out of the limit of $\frac{A(n)}{n}$; the remainder of the proof of Theorem 2.1 establishes the desired result for non-homogeneous $R(n)$.}. For any given $r/q$, this somewhat restricts the possible values of $k$ and $p_i$ for which a ceiling function solution could occur. However, it turns out to be much more natural and productive to explore the related question: for any given set of values of $k$ and $p_i$, is there a set of values for $s_i$ and $a_{ij}$ such that $\cln{rn}{q}$ solves (\ref{eq:generalRecurrence}), where $\frac{r}{q} = \frac{k-1}{\sum_{i=1}^{k}p_i}$?

In Section \ref{sec:Alg} we answer this latter question completely for any given, fixed set of parameters $k$ and $p_i$. To do so we first show how to reduce the possible choices for the unknown parameters to a finite set, and then provide a computer-based approach for checking each possibility. Further, we show that for any given $r/q$ with $r>1$, only finitely many different combinations of the parameters $k$ and $p_i$ are possible. In Section \ref{sec:Empirical} we apply this approach and analyse the findings. The empirical results inspire one new theorem that identifies for every $q$ an explicit recursion solved by $\cln {n}{q}$, as well as several conjectures. In \ref{sec:Conway} we show how our study of rational ceiling function solutions to (\ref{eq:generalRecurrence}) leads to a discovery that these same functions appear as solutions to a natural generalization of Conway's recursion (for background on Conway's recursion, see \cite{Conway, KuboVakil}). We conclude in Section \ref{sec:Conc} with suggestions for future work.

\section{Reduction to Finitely Many Cases} \label{sec:Alg}

We begin by proving that $\cln{rn}{q}$ satisfies (\ref{eq:generalRecurrence}) for all $n$ if and only if it satisfies (\ref{eq:generalRecurrence}) for $q^2$ consecutive values of $n$. For technical reasons, we need to distinguish between the property that a nested recursion $R(n)$ with given initial conditions generates $B(n)$ as its (unique) solution sequence via a recursive calculation, and the property that the sequence $B(n)$ \emph{formally satisfies} the recursion $R(n)$\footnote{See \cite{ConollyLike}, where the idea of formal satisfaction is discussed.}. By the latter we mean only that for all $n$, $B(n)$ satisfies the equation that defines $R(n)$, in the sense that if we substitute the appropriate values of the $B$ sequence on both sides of the equation we get the required equality. But it may \textit{not} be the case that $R(n)$ generates $B(n)$ as its unique solution.

For example, the sequence $\cln{n}{2}$ formally satisfies the recursion $R(n) = R(n+1-R(n+1))-R(n-2-R(n-3))$, but $R(n)$ does not generate $\cln{n}{2}$ for any set of initial conditions because the recursion for $R(n)$ requires that we know the term $R(n+1)$ to calculate $R(n)$. In this example the recursion $R(n)$ has some negative parameters, a situation that we don't normally permit. But we can still find examples of formal satisfaction without generation as a solution where all the parameters are positive, although such examples are necessarily more complex. The reader can verify using Theorem \ref{thm:nIsBounded} below that $\cln{2n}{3}$ formally satisfies the recursion $\langle 4; 1: 3; 2: 9; 5: 7; 4: 6; 1: 7; 5: 6; 15, 15, 15| 0 \rangle$. But this recursion does not generate $\cln{2n}{3}$ as a solution no matter how many initial conditions we provide that equal the function. The problem arises in the last term. Calculating $R(n)$ requires evaluating $R(n-6-3R(n-15))$. But for large enough $n$ each $R(n-15)$ would be about $2n/3$, so $n-6-3R(n-15)$ would be about $-n$. Thus we cannot evaluate $R(n-6-3R(n-15))$. We explain the conditions under which this latter phenomenon occurs in Theorem \ref{thm:FormalGeneration} below.

\begin{theorem}
For some fixed $r/q$, the recursion (\ref{eq:generalRecurrence}) is formally satisfied by $\cln{rn}{q}$ for all $n$ if and only if it is satisfied by $\cln{rn}{q}$ for $0< n \leq q^2$.
\label{thm:nIsBounded}
\end{theorem}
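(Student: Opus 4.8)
The plan is to exploit the one structural feature of $\cln{rn}{q}$ that matters here, namely the quasi‑periodicity $\cln{r(n+q)}{q}=\cln{rn}{q}+r$, valid for every integer $n$ (whence $\cln{r(n+tq)}{q}=\cln{rn}{q}+tr$ for every $t\in\mathbb{Z}$). Write $f(n)=\cln{rn}{q}$ and let $g(n)$ be the defect obtained by substituting $f$ into (\ref{eq:generalRecurrence}), i.e.
\[
g(n)=\sum_{i=1}^k f\!\left(n-s_i-\sum_{j=1}^{p_i}f(n-a_{ij})\right)+\nu-f(n);
\]
the claim is that $g$ vanishes identically (for all $n$) if and only if it vanishes on $\{1,2,\dots,q^2\}$. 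The ``only if'' direction is trivial, so all of the work is in the converse, and the heart of it is to show that $g$ is, up to an explicitly controlled linear drift, periodic with period $q^2$.

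First I would track the nested arguments. Set $A_i(n)=n-s_i-\sum_{j=1}^{p_i}f(n-a_{ij})$. Replacing $n$ by $n+q$ and using $f\big((n+q)-a_{ij}\big)=f(n-a_{ij})+r$ yields the clean identity $A_i(n+q)=A_i(n)+(q-p_ir)$, and iterating $q$ times gives $A_i(n+q^2)=A_i(n)+q(q-p_ir)$. Now apply the quasi‑periodicity of $f$ once more, this time at the \emph{outer} level: since $q(q-p_ir)$ is $(q-p_ir)$ copies of $q$, we get $f\big(A_i(n+q^2)\big)=f\big(A_i(n)\big)+(q-p_ir)r$ (the sign of $q-p_ir$ causes no trouble, as $f(x-q)=f(x)-r$ as well), while $f(n+q^2)=f(n)+qr$. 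Substituting into the formula for $g$ and collecting terms gives the drift relation
\[
g(n+q^2)=g(n)+r\Big(q(k-1)-r\sum_{i=1}^k p_i\Big).
\]
Running the very same computation ``one $q$-block at a time'', keeping $n\bmod q$ fixed, shows in addition that $g(n)$ depends on $n$ only through $n\bmod q^2$ together with the linear term just isolated; concretely $g(n)=r\big(q(k-1)-r\sum p_i\big)\flr{n}{q^2}+g\big(n\bmod q^2\big)$, where the second summand means $g$ evaluated at the representative of $n$ in $\{0,1,\dots,q^2-1\}$.

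To conclude, recall that by Theorem~2.1 of \cite{ConollyLike} (whose growth‑rate argument, as noted above, survives the non‑homogeneous term) a ceiling function $\cln{rn}{q}$ can satisfy (\ref{eq:generalRecurrence}) for all $n$ only when $\frac{r}{q}=\frac{k-1}{\sum_{i=1}^k p_i}$, i.e. only when $q(k-1)-r\sum p_i=0$; this is the case in which the equivalence has content, and there the drift term is $0$, so $g$ is genuinely periodic with period $q^2$. Since any $q^2$ consecutive integers form a complete residue system modulo $q^2$, a $q^2$-periodic $g$ vanishes everywhere as soon as it vanishes on $\{1,2,\dots,q^2\}$; combined with the displayed structure of $g$ this gives the theorem.

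I expect the only genuine obstacle to be the bookkeeping in the first two steps: propagating the ceiling quasi‑periodicity cleanly through the \emph{nested} occurrences of $f$, and correctly separating the ``floor-type'' behaviour of $\cln{r\,\cdot\,}{q}$ (which is exactly what forces $g$ to depend on $n\bmod q^2$ rather than merely on $n\bmod q$) from the outer evaluation. Once the identity $A_i(n+q)=A_i(n)+(q-p_ir)$ is in hand the rest is routine, though one must keep in mind throughout that everything is being asserted about the \emph{formal} substitution, so that inner arguments running non‑positive are harmless.
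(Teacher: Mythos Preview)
Your proposal is correct and follows essentially the same route as the paper: both arguments push the quasi-periodicity $\cln{r(n+q)}{q}=\cln{rn}{q}+r$ through the nested occurrences to show that formal satisfaction at $n$ is equivalent to formal satisfaction at $n\pm q^2$, with the drift term $r\bigl(q(k-1)-r\sum_i p_i\bigr)$ killed by the standing necessary condition $r/q=(k-1)/\sum_i p_i$. The only cosmetic difference is that you phrase this via a defect function $g$ and its $q^2$-periodicity, whereas the paper directly verifies that both sides of the recursion shift by $rq$ when $n\mapsto n+q^2$; the underlying computation is identical.
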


\begin{proof}
The ``only if" condition is obvious. We prove necessity.
First we show that if (\ref{eq:generalRecurrence}) is formally satisfied by
$\cln{rn}{q}$ for some given value of $n$, then it is also formally satisfied by this ceiling function at $n+q^2$.
If (\ref{eq:generalRecurrence}) is formally satisfied by
$\cln{rn}{q}$, then:
\begin{align*}
\cln{rn}{q} = \sum_{i=1}^k \cln{r(n-s_i-\sum_{j=1}^{p_i} \cln{r(n-a_{ij})}{q})}{q} + \nu
\end{align*}
We now evaluate both sides of the above equation at $n+q^2$. The left hand side is easy: $\cln{r(n+q^2)}{q} = \left\lceil\frac{rn}{q} + rq\right\rceil = \cln{rn}{q}+rq$. The right hand side requires a little more effort:
\begin{align*}
\sum_{i=1}^k \cln{r(n+q^2-s_i-\sum_{j=1}^{p_i} \cln{r(n+q^2-a_{ij})}{q})}{q} + \nu=
\sum_{i=1}^k \cln{r(n+q^2-s_i-\sum_{j=1}^{p_i} (\cln{r(n-a_{ij})}{q}+rq))}{q}+ \nu\\
=\sum_{i=1}^k \cln{r(n+q^2-rqp_i-s_i-\sum_{j=1}^{p_i} \cln{r(n-a_{ij})}{q})}{q}+ \nu\\
=\sum_{i=1}^k (\cln{r(n-s_i-\sum_{j=1}^{p_i} \cln{r(n-a_{ij})}{q})}{q} + rq-r^2p_i)+ \nu\\
=\sum_{i=1}^k \cln{r(n-s_i-\sum_{j=1}^{p_i} \cln{r(n-a_{ij})}{q})}{q} + krq-r^2\sum_{i=1}^{k} p_i+ \nu.
\end{align*}

As we mentioned in Section \ref{sec:Intro}, since $\cln{rn}{q}$ satisfies (\ref{eq:generalRecurrence}), it must be the case that $\frac{r}{q} = \frac{k-1}{\sum_{i=1}^{k}p_i}$. Thus,
$\sum_{i=1}^{k}p_i = \frac{q(k-1)}{r}$, from which we get $krq-r^2\sum_{i=1}^{k} p_i=krq-(k-1)rq=rq$, and both sides of the equation agree, as required. So we have shown that if (\ref{eq:generalRecurrence}) is formally satisfied by
$\cln{rn}{q}$ for some $n$, then it is formally satisfied by this ceiling function
for $n+q^2$. Note that same argument shows that this is also the case for $n-q^2$ instead of $n+q^2$. Thus, by the usual division algorithm, we conclude that
if (\ref{eq:generalRecurrence}) is satisfied by $\cln{rn}{q}$ for $0< n \leq q^2$,
then it is formally satisfied by this ceiling function for all $n$. This completes the proof.
\end{proof}

Theorem \ref{thm:nIsBounded} makes it possible to check by direct calculation whether the ceiling function $\cln{rn}{q}$
formally satisfies a given nested recursion (\ref{eq:generalRecurrence}) with $\frac{r}{q} = \frac{k-1}{\sum_{i=1}^{k}p_i}$; we merely check that equality holds for finitely many values of $n$. However, this still leaves infinitely many combinations of the parameters $s_i, a_{ij}$ and $\nu$ to check in order to answer the question we posed at the outset. Thus, our next step is to reduce this parameter space to a finite number of combinations.

Given (fixed) $k$ and $p_i$, we establish a natural equivalence relation on the set of combinations of parameters $s_i, a_{ij}$ and $\nu$. For an arbitrary combination, treat the tuple
$\langle s_1;a_{11},a_{12},\ldots,a_{1p_1} : s_2;a_{21},a_{22},\ldots,a_{2p_2} :
  \cdots :s_k;a_{k1},a_{k2},\ldots,a_{kp_k}|\nu \rangle$
as a vector, denoted by $  y $. For simplicity here we allow negative $a_{ij}$, but we will show that every equivalence class has a member with all $a_{ij}>0$; see Theorem \ref{thm:boundedParameters}. Note that $  y \in \mathbb{Z}^{k+1+\sum_{i=1}^{k} p_{i}}$.

Define the difference function $ h(n,  y) = \cln{rn}{q} - \sum_{i=1}^{k} \cln{r(n-s_{i}-\sum_{j=1}^{p_{i}}\cln{r(n-a_{ij})}{q}}{q}-\nu$. Observe that for fixed $  y$, (\ref{eq:generalRecurrence}) is formally satisfied by $\cln{rn}{q}$ if and only if $h(n,  y)=0$. We define the following equivalence relation on the vectors $  y$:

\begin{gather*}
 \langle \cdots : s_i; \ldots, a_{ij}, \ldots : \cdots \rangle
 \sim
 \langle \cdots : s_i+dr; \ldots, a_{ij}+dq, \ldots : \cdots \rangle \tag{i} \\
  \langle \cdots : s_i; \ldots : \cdots | \nu \rangle
 \sim
 \langle \cdots : s_i+cq; \ldots :  \cdots | \nu+cr \rangle \tag{ii}
\end{gather*}
where $c$, $d \in \mathbb{Z}$. Then we have the following:

\begin{theorem}
 If $  y \sim   y'$, then $h(n,  y) = h(n,   y')$.
\label{thm:invarianceOfH}
\end{theorem}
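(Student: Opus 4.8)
The plan is to verify the invariance separately for each of the two generating relations (i) and (ii), since a general equivalence is obtained by composing finitely many such elementary moves, and $h$ being unchanged under each move immediately gives invariance under any composition. So it suffices to check: (a) replacing some $s_i$ by $s_i+dr$ and each $a_{ij}$ in that same block $i$ by $a_{ij}+dq$ leaves $h(n,y)$ unchanged; and (b) replacing some $s_i$ by $s_i+cq$ and $\nu$ by $\nu+cr$ leaves $h(n,y)$ unchanged. Throughout I will use the elementary identity $\cln{r(m+tq)}{q} = \cln{rm}{q} + tr$ for any integer $t$, which is exactly the computation already performed on the left-hand side in the proof of Theorem \ref{thm:nIsBounded}.

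For move (a), fix the block index $i$ and track how the $i$-th summand of $\sum_{i=1}^k \cln{r(n-s_i-\sum_j \cln{r(n-a_{ij})}{q})}{q}$ changes. Each inner term $\cln{r(n-a_{ij})}{q}$ becomes $\cln{r(n-a_{ij}-dq)}{q} = \cln{r(n-a_{ij})}{q} - dr$, so the inner sum $\sum_{j=1}^{p_i}\cln{r(n-a_{ij})}{q}$ decreases by $dr p_i$. Meanwhile $s_i$ increases by $dr$. Hence the argument $n - s_i - \sum_j \cln{r(n-a_{ij})}{q}$ changes by $-dr + dr p_i = dr(p_i - 1)$. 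Since $r/q = (k-1)/\sum_i p_i$ we have $rp_i \equiv \ldots$ — more simply, $dr(p_i-1)$ need not be a multiple of $q$ on its own, so I must be a little more careful: I should instead observe that the argument changes by exactly $dr(p_i-1)$, and apply the ceiling identity only when this is a multiple of $q$. Actually the cleanest route is: the argument of the outer ceiling changes by $d r (p_i - 1)$, and since $d r (p_i-1) = d q \cdot \frac{r(p_i-1)}{q}$ this is a multiple of $q$ precisely when $q \mid r(p_i - 1)$, which is \emph{not} automatic. I therefore expect the main obstacle to be exactly this bookkeeping point: one must use that the parameters were chosen so that $r/q = (k-1)/\sum p_i$ is in lowest terms in the relevant sense, or argue more carefully that move (i) as stated is only being applied with the scaling $dr, dq$ rather than arbitrary shifts, so that the net change $d r(p_i-1)$ combined with the $+dr$ from $s_i$ must be re-examined. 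The correct accounting: the argument becomes $n - (s_i + dr) - (\sum_j \cln{r(n-a_{ij})}{q} - dr p_i) = \big(n - s_i - \sum_j \cln{r(n-a_{ij})}{q}\big) + dr(p_i-1)$; one then writes $dr(p_i-1) = dq \cdot r(p_i-1)/q$ and needs $r(p_i - 1)/q \in \mathbb{Z}$. I will handle this by noting it suffices to treat the case $p_i = 1$ after a preliminary reduction, or — more robustly — by restating (i) with the shift chosen to be a multiple of $q$ in the outer argument; in the write-up I will follow whichever convention the paper's definition of $\sim$ actually intends, and simply push the constant $dr(p_i-1)$ through the ceiling using $\cln{r(x + dq(p_i-1))}{q} = \cln{rx}{q} + dr(p_i-1)$, which is valid since $dq(p_i-1)$ \emph{is} a multiple of $q$. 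Wait — indeed $dr(p_i-1)$ should be rewritten as coming from a shift of $d(p_i-1)q$ inside, no: the shift inside the argument is literally $dr(p_i-1)$, an integer, and $\cln{r(x+t)}{q}$ for integer $t$ is a multiple-of-$q$ shift only if $q\mid t$. The resolution is that each of the $p_i$ inner terms shifted by $dq$ (a multiple of $q$, good) produced a clean $-dr$ each; but the \emph{outer} argument shift of $dr(p_i-1)$ is then absorbed because... I will present it as: the outer argument shifted by $dr(p_i-1)$, and since this quantity is an integer, combine it with the explicit $-s_i$ shift; net, the $i$-th outer summand changes by an amount that cancels against the change in $\cln{rn}{q}$ only after summing over all blocks and invoking $\sum_i r(p_i-1)\cdot(\text{something})$. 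I will lay this out cleanly in the final proof, mirroring the algebra in Theorem \ref{thm:nIsBounded}; I expect it to collapse exactly as there.

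For move (ii), fix $i$ and replace $s_i$ by $s_i + cq$ and $\nu$ by $\nu + cr$. The inner sum is untouched, so the argument of the $i$-th outer ceiling becomes $\big(n - s_i - \sum_j \cln{r(n-a_{ij})}{q}\big) - cq$, a shift by $-cq$, a clean multiple of $q$. Hence by the ceiling identity the $i$-th outer summand decreases by exactly $cr$. All other outer summands and $\cln{rn}{q}$ are unchanged. Therefore $\sum_{i=1}^k \cln{\cdots}{q}$ decreases by $cr$ while $\nu$ increases by $cr$, so $h(n,y) = \cln{rn}{q} - \sum_{i=1}^k\cln{\cdots}{q} - \nu$ is unchanged. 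This direction is routine and will be one or two lines. Finally, because $\sim$ is the equivalence relation generated by moves of type (i) and (ii), and $h(n,\cdot)$ is invariant under each generating move for every $n$, it is invariant under their transitive closure, proving $h(n,y)=h(n,y')$ whenever $y\sim y'$ and hence the theorem.
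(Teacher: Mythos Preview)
Your treatment of relation (ii) is correct and essentially identical to the paper's: the shift $s_i \mapsto s_i + cq$ moves the outer argument by $-cq$, so the outer ceiling drops by $cr$, which is exactly compensated by $\nu \mapsto \nu + cr$.

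For relation (i), however, you have misread the definition. Relation (i) shifts $s_i$ by $dr$ and a \emph{single} entry $a_{ij}$ (for one fixed pair $(i,j)$) by $dq$; it does \emph{not} shift every $a_{ij}$ in block $i$ simultaneously. This is visible from the notation $\langle \cdots : s_i; \ldots, a_{ij}, \ldots : \cdots \rangle$, which singles out one $a_{ij}$, and is confirmed by the way (i) is applied in the proof of Theorem~\ref{thm:boundedParameters}, where each $a_{ij}$ is reduced individually and the corresponding $s_i$ absorbs $-u_{ij}r$ one index $j$ at a time. With the correct reading the verification is immediate: the one inner term becomes
\[
\cln{r(n-a_{ij}-dq)}{q} \;=\; \cln{r(n-a_{ij})}{q} - dr,
\]
so the outer argument changes by $-dr + dr = 0$ and the $i$th summand is literally unchanged. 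There is no $dr(p_i-1)$ discrepancy, no need to invoke $r/q = (k-1)/\sum p_i$, and none of the difficulties you wrestle with in your sketch arise.

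The long passage in your proposal where you try to force $dr(p_i-1)$ to be a multiple of $q$, contemplate reducing to $p_i=1$, and finally defer to ``whichever convention the paper's definition of $\sim$ actually intends'' is therefore not a proof at all---it is a symptom of the misreading. Once (i) is understood as a single-entry move, both checks are two-line computations, exactly as in the paper.
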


\begin{proof}

Note that it is sufficient to prove the statement of the theorem for (i) and (ii) separately.
We first check (i). Observe that

\begin{align*}
\cln{r(n-s_{i}-dr- ... -\cln{r(n-a_{ij} - dq)}{q}-...)}{q} &=
\cln{r(n-s_{i}-dr- ... -\left\lceil \frac{r(n-a_{ij})}{q} - dr \right\rceil-...)}{q} =\\
& =  \cln{r(n-s_{i}-dr-...-\cln{r(n-a_{ij})}{q}+dr-...)}{q}=\\
& = \cln{r(n-s_{i}-...-\cln{r(n-a_{ij})}{q}-...)}{q}
\end{align*}
Since the other terms in $h$ are not affected by (i), it follows that
\begin{align*}
h(n,\langle \cdots : s_i; \ldots, a_{ij}, \ldots : \cdots \rangle) =
h(n,\langle \cdots : s_i+dr; \ldots, a_{ij}+dq, \ldots : \cdots \rangle)
\end{align*}
as required. Now we check (ii). Notice that

\begin{align*}
 \cln{r(n-s_{i}-cq-\sum_{j=1}^{p_{i}} \cln{r(n-a_{ij})}{q})}{q} + \nu + cr &=
 \left\lceil\frac{r(n-s_{i}-\sum_{j=1}^{p_{i}} \cln{r(n-a_{ij})}{q})}{q} -rc \right\rceil + \nu + cr = \\
=&\cln{r(n-s_{i}-\sum_{j=1}^{p_{i}} \cln{r(n-a_{ij})}{q})}{q}-rc + \nu + cr = \\
=&\cln{r(n-s_{i}-\sum_{j=1}^{p_{i}} \cln{r(n-a_{ij})}{q})}{q} + \nu
\end{align*}
which implies that

\begin{align*}
h(n,\langle \cdots : s_i; \ldots| \nu \rangle) =
h(n,\langle \cdots : s_i+cq;\ldots| \nu+cr \rangle),
\end{align*}
completing the proof.
\end{proof}

We now show that every equivalence class has a representative with all parameters except for $\nu$ in a bounded range.

\begin{theorem}
        If $[  y] \in (\mathbb{Z}^{k+1+\sum_{i=1}^{k} p_{i}})/\sim$ there exists $  y' \in [  y]$ such that $  y'=\langle s'_1;a'_{11},a'_{12},\ldots,a'_{1p_1} : s'_2;a'_{21},a'_{22},\ldots,a'_{2p_2} :
  \cdots :s'_k;a'_{k1},a'_{k2},\ldots,a'_{kp_k}|\nu' \rangle $, with $0 \leq s'_i, a'_{ij} < q$.

\label{thm:boundedParameters}
\end{theorem}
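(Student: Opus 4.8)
The plan is to obtain the normalized representative by applying the two generating moves (i) and (ii) of $\sim$ in a fixed order: first use (i) to drive every $a_{ij}$ into the interval $[0,q)$, then use (ii) to drive every $s_i$ into $[0,q)$, and finally check that the second batch of moves does not undo the first. No clever choice of $y'$ is required; the whole argument is the division algorithm modulo $q$ applied one coordinate at a time, together with the fact (Theorem~\ref{thm:invarianceOfH}) that these moves preserve $h$, so that nothing about formal satisfaction is lost in passing to the representative.

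First I would normalize the $a_{ij}$. Fixing $i$ and $j$, move (i) replaces the pair $(s_i,a_{ij})$ by $(s_i+dr,\,a_{ij}+dq)$ for an arbitrary $d\in\mathbb{Z}$ and changes nothing else. Since $q>0$, there is a unique $d=d_{ij}$ with $a_{ij}+d_{ij}q\in\{0,1,\dots,q-1\}$; applying the corresponding move, and then repeating over all $j\in\{1,\dots,p_i\}$ and all $i\in\{1,\dots,k\}$, yields an equivalent vector in which every $a_{ij}$ lies in $[0,q)$. Inside block $i$ the cumulative effect on $s_i$ is the addition of some integer multiple of $r$, and $\nu$ is untouched; the resulting values of the $s_i$ do not matter at this point.

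Next I would normalize the $s_i$. Move (ii) replaces the pair $(s_i,\nu)$ by $(s_i+cq,\,\nu+cr)$ for an arbitrary $c\in\mathbb{Z}$ and alters no $a_{ij}$. For each $i$ I would choose the unique $c=c_i$ with $s_i+c_iq\in\{0,1,\dots,q-1\}$ and apply the move; this places every $s_i$ in $[0,q)$ and replaces $\nu$ by $\nu+r\sum_i c_i$. The composition of all the type-(i) and type-(ii) moves used sends $y$ to a vector $y'=\langle s'_1;a'_{11},\dots:\cdots:s'_k;a'_{k1},\dots\,|\,\nu'\rangle$ with $0\le s'_i,a'_{ij}<q$ and $\nu'\in\mathbb{Z}$, and $y'\sim y$ by construction; this is precisely the assertion. (Should one prefer all parameters strictly positive, as mentioned before Theorem~\ref{thm:invarianceOfH}, one simply selects residues in $\{1,\dots,q\}$ instead, landing in $1\le s'_i,a'_{ij}\le q$.)

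I expect no serious obstacle here; the one point I would flag explicitly rather than leave implicit is the \emph{independence of the two stages}. One must check that the type-(ii) moves used to normalize the $s_i$ cannot push any $a_{ij}$ back outside $[0,q)$, which is immediate since move (ii) affects only an $s_i$ and $\nu$. Symmetrically, the type-(i) moves normalizing different $a_{ij}$ within a single block interact only through $s_i$ — each adds a multiple of $r$ to $s_i$ and a multiple of $q$ to its own $a_{ij}$ — so they may be carried out in any order and then cleaned up by the single subsequent pass of type-(ii) moves. Beyond this bookkeeping the proof is routine.
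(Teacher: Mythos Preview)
Your proof is correct and follows essentially the same approach as the paper's: first apply move~(i) coordinate by coordinate to reduce each $a_{ij}$ modulo $q$ (absorbing the resulting multiples of $r$ into $s_i$), then apply move~(ii) to reduce each $s_i$ modulo $q$ (absorbing the excess into $\nu$). Your explicit remark that the type-(ii) moves do not disturb the already-normalized $a_{ij}$ is a point the paper leaves implicit, but otherwise the arguments are the same.
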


\begin{proof}
Let $  y = \langle s_1;a_{11},a_{12},\ldots,a_{1p_1} : s_2;a_{21},a_{22},\ldots,a_{2p_2} :
  \cdots :s_k;a_{k1},a_{k2},\ldots,a_{kp_k}|\nu \rangle$. First we use (i) to modify each of the $a_{ij}$ that lies outside the range $[0,q)$. To do so we first use the division algorithm to write $a_{ij} = u_{ij}q+a'_{ij}$ with $0\leq a'_{ij} <q$. For an arbitrary fixed $i,j$, we have:
$\langle \cdots : s_i; \ldots, a_{ij}, \ldots : \cdots \rangle =  \langle \cdots : s_i; \ldots, u_{ij}q+a'_{ij}, \ldots : \cdots \rangle \sim \langle \cdots : s_i-u_{ij}r; \ldots, u_{ij}q+a'_{ij}-u_{ij}q, \ldots : \cdots \rangle = \langle \cdots : s_i-u_{ij}r; \ldots, a'_{ij}, \ldots : \cdots \rangle$.

We repeat this process for all $i,j$. Relabelling $s_i-r\sum_{j=1}^{p_i}u_{i,j} = s^*_i$, we have $\langle \cdots : s^*_i; \ldots, a'_{ij}, \ldots : \cdots \rangle$ with all the $a'_{ij}$ in the range $[0,q)$.

%%%%%%%%%%% Reducing s_i %%%%%%%%%%%%%%%%%%%%%%%%%

In an analogous fashion we use (ii) to ensure that all the $s^*_i$ lie in the range $[0,q]$ by moving any excess onto $\nu$. By the division algorithm, for each $i$, $s^*_{i} = l_i q + s'_i$ with $0\leq s'_i<q$. Applying (ii) we obtain:
$\langle \cdots : s^*_i; \ldots :  \cdots | \nu \rangle =  \langle \cdots : l_iq+s'_i; \ldots :  \cdots | \nu \rangle \sim \langle \cdots : l_iq+s'_i-l_iq; \ldots :  \cdots | \nu-l_ir \rangle =  \langle \cdots : s'_i; \ldots :  \cdots | \nu-l_ir \rangle$.

We repeat this for all $i$. Then we relabel $\nu-r\sum_{i=1}^kl_i = \nu'$. This gives $\langle \cdots : s'_i; \ldots, a'_{ij}, \ldots:  \cdots | \nu' \rangle$, with $0\leq s'_i < q, 0 \leq a'_{ij} < q$, as desired.
\end{proof}

It follows from the above result that we need only consider finitely many combinations of values for $s_i$ and $a_{ij}$. Furthermore, for any fixed choice of $s_i$ and $a_{ij}$, there is only one possible value of $\nu$ that could allow formal satisfaction to occur. This value is defined by $h(1, \langle s_1;a_{11},a_{12},\ldots,a_{1p_1} : s_2;a_{21},a_{22},\ldots,a_{2p_2} :
  \cdots :s_k;a_{k1},a_{k2},\ldots,a_{kp_k} |\nu \rangle=0$. More explicitly, we can write \\$\nu=h(1, \langle s_1;a_{11},a_{12},\ldots,a_{1p_1} : s_2;a_{21},a_{22},\ldots,a_{2p_2} :
  \cdots :s_k;a_{k1},a_{k2},\ldots,a_{kp_k}|0 \rangle)$.

Together,  Theorems \ref{thm:nIsBounded},\ref{thm:invarianceOfH},
and \ref{thm:boundedParameters}, along with the above commentary on $\nu$, show that
given the parameters $k$ and $p_i$, $1 \le i \le k$, we need to check only finitely many parameter vectors
$\langle s_1;a_{11},a_{12},\ldots,a_{1p_1} :
  \cdots :s_k;a_{k1},a_{k2},\ldots,a_{kp_k}| \nu \rangle$, each for finitely many $n$,
to determine whether any ceiling function formally satisfies a recursion of the form (\ref{eq:generalRecurrence}). This still leaves a very large parameter space to investigate, which we accomplish using a computer.

Our program only checks for formal satisfaction. Thus, as we explained earlier, we need to demonstrate the conditions under which formal satisfaction implies generation as a unique solution sequence.

\begin{theorem}\label{thm:FormalGeneration}
Suppose that $\cln{rn}{q}$ formally satisfies a recursion $R(n)$ of the form ($\ref{eq:generalRecurrence})$ with all $a_{ij} > 0$. Further, assume that $rp_i/q \le 1$ for all $i$. Then there exists some recursion $R'(n)$ in the same equivalence class as $R(n)$ and some integer $m$ such that $\cln{rn}{q}$ is the unique solution generated by the recursion $R'(n)$ with the initial conditions $R'(1) = \cln{r}{q}, R'(2) = \cln{2r}{q}, \ldots, R'(m) = \cln{mr}{q}$. If we have $rp_i/q < 1$ for all $i$, then the above statement holds for $R(n)$ itself (with no need for $R'(n)$).
\end{theorem}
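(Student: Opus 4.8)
The plan is to set up an induction on $n$ that simultaneously tracks two things: that the recursive computation of $R(n)$ never references an index that is $\le 0$ or has not yet been computed, and that the value produced equals $\cln{rn}{q}$. The key quantity to control is, for each summand $i$, the \emph{argument} $\alpha_i(n) = n - s_i - \sum_{j=1}^{p_i} R(n-a_{ij})$. If $R(m) = \cln{rm}{q}$ for all $m < n$ (and for the base-case indices), then $\sum_{j=1}^{p_i} R(n-a_{ij}) = \sum_{j=1}^{p_i}\cln{r(n-a_{ij})}{q}$, and since each $\cln{r(n-a_{ij})}{q}$ is within a bounded additive error of $\frac{r(n-a_{ij})}{q}$, we get $\alpha_i(n) = n - s_i - \frac{r p_i}{q} n + O(1) = \left(1 - \frac{rp_i}{q}\right) n + O(1)$. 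This is where the hypothesis $rp_i/q \le 1$ enters: the coefficient $1 - rp_i/q$ is nonnegative, so $\alpha_i(n)$ does not drift to $-\infty$; when $rp_i/q < 1$ it in fact grows linearly, so for $n$ large enough $\alpha_i(n) \ge 1$, and $\alpha_i(n) < n$ is automatic (the subtracted terms are positive since the $a_{ij}>0$ and $R$ is positive). That handles all large $n$; the finitely many small $n$ are absorbed by choosing $m$ large enough, so that every argument $\alpha_i(n)$ for $n \le m$ either lands among the prescribed initial values or has already been shown to equal the ceiling function. Uniqueness of the generated sequence is then immediate: at each step the recursion dictates exactly one value, so there is nothing to choose.

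The delicate case is the boundary $rp_i/q = 1$, where the linear term vanishes and $\alpha_i(n) = O(1)$ — a bounded, but not necessarily positive or non-constant, expression in $n$. Here I would argue as follows. Because $rn/q$ differs from $\cln{rn}{q}$ by a quantity that is periodic in $n$ with period $q$, and because all the $a_{ij}$ and $s_i$ are integers, the exact value $\alpha_i(n)$ is itself a periodic function of $n$ with period $q$ (for $n$ beyond the initial segment). So $\alpha_i(n)$ takes only finitely many distinct values, each occurring infinitely often. If any of those values were $\le 0$, then for infinitely many $n$ the recursion would demand $R$ at a non-positive index — but formal satisfaction tells us the \emph{equation} holds for all $n$ with $\cln{rn}{q}$ plugged in, and one checks that a non-positive argument would force $\cln{r\,\alpha_i(n)}{q} \le 0$ on the right while the left side $\cln{rn}{q}$ grows without bound, together with the other (bounded-below) summands; tracing the asymptotics shows this is impossible when $r>0$. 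Hence every $\alpha_i(n)$ that actually arises is $\ge 1$. This is exactly the point where one must pass from $R(n)$ to a representative $R'(n)$: shifting within the equivalence class (using relations (i) and (ii), which by Theorem \ref{thm:invarianceOfH} preserve $h$ and hence formal satisfaction) lets us normalize so that, in addition, every such argument is strictly less than the current index $n$, which is needed to make the induction well-founded when the linear gap is zero. I expect this normalization — pinning down precisely which shift guarantees $\alpha_i(n) < n$ for all relevant $n$ in the period-$q$ boundary case — to be the main obstacle; in the strict inequality case no such adjustment is needed, giving the final sentence of the theorem.

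Concretely, the steps are: (1) from formal satisfaction and the integrality of parameters, show each $\alpha_i(n)$ is eventually periodic in $n$ with period dividing $q$; (2) using $rp_i/q \le 1$, show $\liminf_n \alpha_i(n) \ge$ some constant, and combine with periodicity to conclude $\alpha_i(n)$ is bounded below by a constant for all $n$ past the initial segment; (3) rule out $\alpha_i(n) \le 0$ by the asymptotic/counting argument above, so $\alpha_i(n) \ge 1$ whenever $n$ exceeds the initial segment; (4) pick $m$ larger than every index at which these arguments can still dip into the initial segment, and in the equality case replace $R$ by a class representative $R'$ so that additionally $\alpha_i(n) < n$; (5) run the induction: assuming $R'(j) = \cln{rj}{q}$ for all $j < n$, each summand's argument is a valid, already-computed (or initial) index, substituting gives the formally-satisfied identity, and hence $R'(n) = \cln{rn}{q}$; (6) observe uniqueness is automatic. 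When $rp_i/q < 1$ strictly, step (2) gives $\alpha_i(n) \to \infty$ and step (3)–(4)'s normalization is unnecessary, so the argument goes through verbatim for $R(n)$ itself.
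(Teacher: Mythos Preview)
Your treatment of the strict case $rp_i/q<1$ matches the paper: the limit argument gives $\alpha_i(n)/n\to 1-rp_i/q\in(0,1)$, so for all large $n$ the outer arguments lie in $\{1,\dots,n-1\}$, and finitely many small $n$ are absorbed into the initial conditions.

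The gap is in the boundary case $rp_i/q=1$, and specifically in step~(3). Your periodicity observation is correct: when $rp_i=q$ one checks $\alpha_i(n+q)=\alpha_i(n)$, so $\alpha_i$ takes finitely many values. But the argument you give to rule out $\alpha_i(n)\le 0$ is invalid. Formal satisfaction means precisely that the identity
\[
\cln{rn}{q}=\sum_{i}\cln{r\,\alpha_i(n)}{q}+\nu
\]
holds for \emph{all} $n$; there is no asymptotic contradiction to extract. Having one summand $\cln{r\,\alpha_i(n)}{q}$ bounded (even nonpositive) while $\cln{rn}{q}\to\infty$ is perfectly consistent, because the remaining summands make up the difference --- that is exactly what the identity asserts. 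Concretely, nothing prevents a recursion in the equivalence class from having $s_i$ so large that the periodic values of $\alpha_i$ are all nonpositive, and such recursions do formally satisfy the ceiling identity.

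You have also inverted the role of the shift to $R'$. Since $\alpha_i(n)$ is bounded in the equality case, the condition $\alpha_i(n)<n$ is automatic for all large $n$ and needs no normalization. What is \emph{not} automatic is $\alpha_i(n)\ge 1$, and that is what the equivalence relations are used for. The paper's device is direct: use relation~(i) to force $0<a_{ij}'\le q$, which gives a uniform bound $\bigl|\cln{r(n-a_{ij}')}{q}-rn/q\bigr|<2$, and then use relation~(ii) to push $s_i'$ below $-2p_i$; combining these with $rp_i=q$ yields $\alpha_i(n)>-s_i'-2p_i>0$. Once you make this correction --- shift to guarantee positivity, not to guarantee $\alpha_i(n)<n$ --- the rest of your induction goes through as written.
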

\begin{proof}
We begin by assuming that $rp_i/q < 1$ for all $i$. Consider the arguments $R(n-a_{ij})$. For $n$ large enough, $0< n-a_{ij} < n$ for all $i,j$. Thus, the desired result holds so long as for all $n$ sufficiently large, the arguments $n-s_i-\sum_{j=1}^{p_i}R(n-a_{ij})$ are all positive and less than $n$. This is equivalent to showing that for all $i$ and for large enough $n$, $\frac{n-s_i-\sum_{j=1}^{p_i}\cln{(n-a_{ij})r}{q}}{n}$ lies strictly between $0$ and $1$. Distributing the preceding fraction, we get $1-s_i/n-\sum_{j=1}^{p_i}\frac{\cln{(n-a_{ij})r}{q}}{n}$. But $s_i/n \rightarrow 0$, and $\frac{\cln{(n-a_{ij})r}{q}}{n} \rightarrow r/q$, so $\frac{n-s_i-\sum_{j=1}^{p_i}\cln{(n-a_{ij})r}{q}}{n} \rightarrow 1-p_ir/q$, which fulfills the requirement.

Now we deal with the case where for some $i$, $p_ir/q = 1$. In this situation we have only that $\frac{n-s_i-\sum_{j=1}^{p_i}\cln{(n-a_{ij})r}{q}}{n} \rightarrow 0$, so we still need to prove that the argument of the $i^{th}$ term is always eventually positive. To do so, we transform $R(n)$ with an equivalent recursion $R'(n)$ as follows. First, using relation (i), we replace all $a_{ij}$ with $a_{ij}'$ such that $0 < a_{ij}' \leq q$ and move any excess onto $s_i$, which we rename $s_i^*$. Next, we use relation (ii) to change $s_i^*$ into $s_i'$ such that $s_i' < -2p_i$, and move the necessary amount onto $\nu$, renaming the new constant term $\nu'$. Now, the $i^{th}$ recursion summand is $R'(n-s_i'-\sum_{j=1}^{p_i}R'(n-a_{ij}'))$. Because $0< a_{ij}' \leq q$, we know that $|R'(n-a_{ij}')-rn/q| < 2$, so $n-s_i'-\sum_{j=1}^{p_i}R'(n-a_{ij}')) > n - s_i' - n - 2p_i$. But since $s_i' < -2p_i$, we have $n-s_i'-\sum_{j=1}^{p_i}R'(n-a_{ij}')) > 0$ as required. In this way, we have prevented the $i^{th}$ summand of the recursion from having a negative argument. We can repeat as needed.
\end{proof}

Recall that if $\cln{rn}{q}$ formally satisfies a recursion of the form ($\ref{eq:generalRecurrence})$ then $r/q = \frac{k-1}{\sum_{i=1}^{k}{p_i}}$. Once $k$ and $p_i$ are fixed we showed in Theorems \ref{thm:boundedParameters} and \ref{thm:nIsBounded} above that we only need to check finitely many equalities to determine if $\cln{rn}{q}$ satisfies any recursions with the given $k$ and $p_i$. With Theorem \ref{thm:FormalGeneration}, we know that the only recursions of interest have $p_i \le q/r$ for all $i$. Thus, there are only finitely many possible combinations of $k$ and $p_i$ that can yield a given ceiling function $\cln{rn}{q}$ with $r>1$ as a unique solution.

For example, if $\cln{2n}{5}$ solves some recursion $R(n)$ of the form ($\ref{eq:generalRecurrence})$, then it must be the case that all $p_i \leq 5/2$. This leaves only $k=3, p_1=1, p_2=p_3 = 2$ and $k=5, p_i=2$. By using the approach described earlier in this chapter, for each such combination of $k$ and $p_i$, we can determine conclusively whether there are any choices of $s_i, a_{ij},$ and $\nu$ that do in fact generate the desired ceiling function.

Before we turn to our empirical findings in the next section, we note that there are two additional considerations that allow us to further reduce the parameter space that we must investigate. First, if $\cln{n}{q}$ satisfies some recursion for a given $k_a$ and set of $p_i$ ($1 \leq i \leq k_a$), then $\cln{n}{q}$ also satisfies a recursion with $k'=k_a+1$, the same $p_i$ for $1 \leq i \leq k_a$, and $p_{k'} = q$. For example, since we know (see \cite{BLT}) that $\cln{n}{2}$ solves a recursion with $k=2$ and $p_1=p_2 = 1$, we also know that $\cln{n}{2}$ satisfies a recursion with $k=3$ and $p_1=p_2 = 1, p_3 = 2$, with $k=4$ and $p_1=p_2=1, p_3=p_4=2$, and so on. More precisely:

\begin{theorem}
\label{thm:extraq}
Suppose $\cln{n}{q}$ formally satisfies the recursion $R(n) = \sum_{i=1}^kR(n-s_i-\sum_{j=1}^{p_i}R(n-a_{ij})) + \nu$. Then $\cln{n}{q}$ also formally satisfies the recursion $R'(n) = \sum_{i=1}^kR'(n-s_i-\sum_{j=1}^{p_i}R'(n-a_{ij})) + R(n-1-R(n-1)-R(n-2)-\ldots-R(n-q)) + \nu$.
\end{theorem}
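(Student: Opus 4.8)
The plan is to read $R'(n)$ as the recursion $R(n)$ with a single extra summand grafted on: the first $k$ summands of $R'$ carry exactly the same shift parameters $s_i$ and arguments $a_{ij}$ as those of $R$, the additive constant $\nu$ is unchanged, and the only new ingredient is the term $R'\!\left(n-1-\sum_{j=1}^{q}R'(n-j)\right)$. Writing $B(n):=\cln{n}{q}$, formal satisfaction of $R'$ by $B$ means the defining equality of $R'$ holds once every occurrence of $R$ and $R'$ is replaced by $B$. Since the hypothesis already supplies $B(n)=\sum_{i=1}^{k}B\!\left(n-s_i-\sum_{j=1}^{p_i}B(n-a_{ij})\right)+\nu$ for all $n$, it suffices to show that the extra summand $B\!\left(n-1-\sum_{j=1}^{q}B(n-j)\right)$ equals $0$ for every $n$; the $R'$-equation then collapses to the $R$-equation. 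No positivity or ``generation'' subtleties intervene here, since we assert only formal satisfaction.

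So the whole proof reduces to the elementary identity
\[
\sum_{j=1}^{q}\cln{n-j}{q}\;=\;n-1\qquad\text{for every integer }n,
\]
because then the argument of the extra summand is $n-1-(n-1)=0$, and the summand itself is $B(0)=\cln{0}{q}=0$. To prove the identity I would note that $n-1,n-2,\dots,n-q$ is a block of $q$ consecutive integers, hence contains exactly one multiple of $q$, say $\ell q$; on the $\ell q-(n-q)+1$ members of the block that are $\le \ell q$ one has $\cln{\cdot}{q}=\ell$, and on the remaining $(n-1)-\ell q$ members one has $\cln{\cdot}{q}=\ell+1$, so summing makes all the $\ell$'s cancel and leaves $n-1$. (Alternatively, use $\cln{m}{q}=\flr{m-1}{q}+1$ and sum the floors over a complete window of residues.)

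Plugging this back in, for every $n$,
\begin{align*}
&\sum_{i=1}^{k}B\!\left(n-s_i-\sum_{j=1}^{p_i}B(n-a_{ij})\right)+B\!\left(n-1-\sum_{j=1}^{q}B(n-j)\right)+\nu\\
&\qquad=\sum_{i=1}^{k}B\!\left(n-s_i-\sum_{j=1}^{p_i}B(n-a_{ij})\right)+\nu\;=\;B(n),
\end{align*}
the last equality being the hypothesis; hence $B$ formally satisfies $R'$. The only step that needs any thought is the counting identity $\sum_{j=1}^{q}\cln{n-j}{q}=n-1$, which I expect to be the ``main obstacle'' even though it is routine once the unique multiple of $q$ in the window is isolated. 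A useful sanity check is that the necessary condition $\tfrac{1}{q}=\tfrac{k'-1}{\sum_i p_i}$ is preserved: the new recursion has $k'=k+1$ summands and $\sum_{i=1}^{k'}p_i=\sum_{i=1}^{k}p_i+q=q(k-1)+q=q(k'-1)$.
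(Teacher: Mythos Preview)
Your proposal is correct and follows essentially the same approach as the paper: both arguments reduce to showing the extra summand vanishes by establishing $\sum_{j=1}^{q}\cln{n-j}{q}=n-1$, so that the argument is $0$ and $\cln{0}{q}=0$. The only cosmetic difference is in how that elementary identity is verified: the paper sets $f(n)=n-1-\sum_{j=1}^{q}\cln{n-j}{q}$, checks $f(0)=0$, and observes $f(n+1)=f(n)$ because exactly one of the ceilings increments; you instead isolate the unique multiple of $q$ in the window and count directly. Both are routine.
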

\begin{proof}
The result follows by substituting $\cln{n}{q}$ into the argument of the $(k+1)^{st}$ term in the recursion. Define $f(n):=n-1-\cln{n-1}{q}-\cln{n-2}{q}-\ldots-\cln{n-q}{q}$ for all $n$. Then $f(0) = 0-1-0-0-\ldots-0+1=0$. Next, note that $f(n+1)=f(n)$ since exactly one of the terms $\cln{n-i}{q}$ will increase by 1 when $n$ increases by $1$. Thus, $f(n)=0$ for all $n$, so the $(k+1)^{st}$ summand of $R'(n)$ is always 0.
\end{proof}

Note that generation as a solution sequence for some recursion equivalent to $R'$ follows from Theorem \ref{thm:FormalGeneration}.

Second, if $\cln{n}{q}$ satisfies a recursion with a given $k$ and set of $p_i$, $1 \leq i \leq k$, then $\cln{n}{mq}$ satisfies a recursion with the same $k$ and $p_i'=mp_i$ for $1 \leq i \leq k$. This result is an easy extension of the Order Multiplying Interleaving Theorem proved in \cite{ConollyLike} (Theorem 4.2). We omit the details. For example, since we know (see \cite{BLT}) that $\cln{n}{2}$ solves a recursion with $k=2$ and $p_1=p_2 = 1$, we also know that there are recursions satisfied by $\cln{n}{4}$ with $k=2$ and $p_1=p_2 = 2$, by $\cln{n}{6}$ with $k=2$ and $p_1=p_2=3$, and so on. We omit these entries from the empirical results we present in the following section.

\section{Which Ceiling Functions Occur as Solutions?} \label{sec:Empirical}
Using the computer verification process described above, we are able to identify certain ceiling functions that appear as solutions to recursions and rule out others that cannot occur.
These results are summarized in Table \ref{tab:ClnFunSolutions}.

\begin{table}
\caption{Which ceiling functions occur as solutions?}
\begin{center}
{\begin{tabular}{| c | ccccccc | ccccccc |}
\hline
   & \multicolumn{6}{c}{Recursions exist} &
   & \multicolumn{6}{c}{Recursions do not exist} & \\\hline
$r/q$                     &
$p_1$ & $p_2$ & $p_3$ & $p_4$ & $p_5$ & $p_6$ & $p_7$ &
$p_1$ & $p_2$ & $p_3$ & $p_4$ & $p_5$ & $p_6$ & $p_7$ \\\hline

$1/2$ &
  1   &   1   &       &       &       &       &       &
      &       &       &       &       &       &       \\\hline

$1/3$ &
  2   &   2   &   2   &       &       &       &       &
  1   &   2   &       &       &       &       &       \\\hline

$1/4$ &
  2   &   3   &   3   &       &       &       &       &
  1   &   3   &       &       &       &       &       \\\cline{2-15}

      &
  3   &   3   &   3   &   3   &       &       &       &
      &       &       &       &       &       &       \\\hline

$1/5$ &
  4   &   4   &   4   &   4   &   4   &       &       &
  1   &   4   &       &       &       &       &       \\\cline{2-15}

      &
      &       &       &       &       &       &       &
  2   &   3   &       &       &       &       &       \\\cline{2-15}

      &
      &       &       &       &       &       &       &
  3   &   3   &   4   &       &       &       &       \\\cline{2-15}

      &
      &       &       &       &       &       &       &
  2   &   4   &   4   &       &       &       &       \\\hline

$1/6$ &
  5   &   5   &   5   &   5   &   5   &   5   &       &
  1   &   5   &       &       &       &       &       \\\cline{2-15}

      &
      &       &       &       &       &       &       &
  2   &   4   &       &       &       &       &       \\\hline

$1/7$ &
  6   &   6   &   6   &   6   &   6   &   6   &   6   &
  1   &   6   &       &       &       &       &       \\\cline{2-15}

      &
      &       &       &       &       &       &       &
  2   &   5   &       &       &       &       &       \\\cline{2-15}

      &
      &       &       &       &       &       &       &
  3   &   4   &       &       &       &       &       \\\hline

$2/3$ &
      &       &       &       &       &       &       &
  1   &   1   &   1   &       &       &       &       \\\hline

$2/5$ &
      &       &       &       &       &       &       &
  1   &   2   &   2   &       &       &       &       \\\hline

$3/4$ &
      &       &       &       &       &       &       &
  1   &   1   &   1   &   1   &       &       &       \\\hline

$4/5$ &
      &       &       &       &       &       &       &
  1   &   1   &   1   &   1   &   1   &       &       \\\hline
\end{tabular}}
\end{center}
\label{tab:ClnFunSolutions}
\end{table}

Observe first that ceiling functions of the form $\cln{n}{q}$ always solve at least one recursion for all the cases that we checked. In particular, such functions solve recursions with $k=q$ and all $p_i=q-1$. This empirical observation led us to the following result:

\begin{theorem}
For all $q>1$, the ceiling function $\cln{n}{q}$ formally satisfies $\langle 0;1,2,\ldots,q-1:1;1,2,\ldots,q-1:\ldots q-1;1,2,\ldots,q-1|0 \rangle$.
\end{theorem}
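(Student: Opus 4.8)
By Theorem~\ref{thm:nIsBounded}, it suffices to verify that $\cln{n}{q}$ formally satisfies the displayed recursion for $0 < n \le q^2$ (here $r=1$, so $q^2$ values suffice). But rather than a brute-force finite check, the plan is to give a direct argument valid for all $n$, which is cleaner and explains \emph{why} the recursion works. The recursion in question is $\langle s_i; a_{i1},\ldots,a_{i,q-1} \mid 0\rangle$ with $k=q$ summands, $p_i = q-1$ for every $i$, the inner arguments in each block being $a_{ij}=j$ for $j=1,\ldots,q-1$, and the shifts being $s_i = i-1$ for $i=1,\ldots,q$. Note $\frac{k-1}{\sum p_i} = \frac{q-1}{q(q-1)} = \frac1q$, so the necessary ratio condition holds, consistent with $r/q = 1/q$.

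The key computation is to evaluate the inner sum $T(n) := \sum_{j=1}^{q-1}\cln{n-j}{q}$. First I would establish that as $n$ runs over $q$ consecutive integers, the value $\cln{n}{q}$ takes each of $q$ consecutive integer values exactly once; more precisely, for any $n$, the multiset $\{\cln{n-1}{q}, \cln{n-2}{q}, \ldots, \cln{n-q}{q}\}$ equals $\{m, m, m-1, \ldots\}$ — actually the cleanest fact is the telescoping identity used in Theorem~\ref{thm:extraq}: $n - 1 - \sum_{j=1}^{q}\cln{n-j}{q} = 0$ for all $n$, i.e.\ $\sum_{j=1}^{q}\cln{n-j}{q} = n-1$. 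From this, $T(n) = \sum_{j=1}^{q-1}\cln{n-j}{q} = (n-1) - \cln{n-q}{q} = (n-1) - \cln{n}{q} + 1 = n - \cln{n}{q}$. Therefore the $i$-th inner argument is
\begin{align*}
n - s_i - T(n) = n - (i-1) - \bigl(n - \cln{n}{q}\bigr) = \cln{n}{q} - (i-1) = \cln{n}{q} - i + 1.
\end{align*}
So the right-hand side of the recursion becomes $\sum_{i=1}^{q} \cln{n}{q}\!\left(\text{evaluated at argument } \lceil n/q\rceil - i + 1\right)$, i.e.\ $\sum_{i=1}^{q} \left\lceil \frac{\lceil n/q\rceil - i + 1}{q}\right\rceil$, which is the sum of $\cln{m-i+1}{q}$ for $i = 1, \ldots, q$ where $m = \cln{n}{q}$; that is, $\sum_{t = m-q+1}^{m} \cln{t}{q}$.

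It remains to show $\sum_{t=m-q+1}^{m}\cln{t}{q} = m$ for every integer $m$, which will complete the proof since $m = \cln{n}{q}$ is exactly the left-hand side. This is the same telescoping identity as before with a shift of index: $\sum_{j=1}^{q}\cln{(m+1)-j}{q} = (m+1) - 1 = m$. Hence both sides equal $\cln{n}{q}$ and the recursion is formally satisfied for all $n$, so in particular for $0 < n \le q^2$, and Theorem~\ref{thm:nIsBounded} gives formal satisfaction for all $n$ (though we have it directly). I do not anticipate a serious obstacle: the only subtlety is bookkeeping the index ranges and confirming the shifts $s_i = i-1$ in the stated tuple $\langle 0;1,\ldots,q-1 : 1;1,\ldots,q-1 : \ldots : q-1;1,\ldots,q-1 \mid 0\rangle$ indeed run $0, 1, \ldots, q-1$ across the $q$ blocks, and that the ceiling-of-ceiling collapses via $\bigl\lceil \lceil n/q\rceil/q \cdot \text{(stuff)} \bigr\rceil$ correctly — but since $r=1$ all the inner arguments are already integers, so no nested-ceiling simplification is even needed. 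One should also double check the boundary value $f(0)=0$ type computations (as in Theorem~\ref{thm:extraq}) to be sure the telescoping identity $\sum_{j=1}^q \cln{n-j}{q} = n-1$ holds with no off-by-one error, including for small $n$.
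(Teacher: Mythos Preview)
Your proof is correct. Both you and the paper arrive at the same intermediate reduction: with $m=\cln{n}{q}$, the right-hand side collapses to $\sum_{t=m-q+1}^{m}\cln{t}{q}$ (equivalently the paper's $\sum_{i=0}^{q-1}\cln{m'+1-i}{q}$ in its division-algorithm notation). The approaches diverge only in how this last sum is evaluated. The paper restricts to $1\le n\le q^2$ via Theorem~\ref{thm:nIsBounded}, so that its $m'$ lies in $\{0,\ldots,q-1\}$, and then reads off $\cln{m'+1-i}{q}=[[i<m'+1]]$ directly. You instead apply the telescoping identity $\sum_{j=1}^{q}\cln{n-j}{q}=n-1$ a second time (with $n$ replaced by $m+1$), which works for every integer $m$ and hence for every $n$, making the invocation of Theorem~\ref{thm:nIsBounded} unnecessary. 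Your route is slightly more economical in that it recycles the identity already proved in Theorem~\ref{thm:extraq} and avoids the case analysis on residues; the paper's route is more self-contained and makes the role of the bound $n\le q^2$ explicit.
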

\begin{proof}
By Theorem $\ref{thm:nIsBounded}$, we need only check that $\cln{n}{q} = \sum_{i=0}^{q-1}\cln{n-i-\sum_{j=1}^{q-1}\cln{n-j}{q}}{q}$ for $1 \leq n \leq q^2$. By the division algorithm, write $n = mq+d$ with $1 \leq d \leq q$ (observe that we exclude $d=0$ and instead allow $d=q$); by our assumption on $n$, we have $0 \leq m \leq q-1$. Now, $\cln{n}{q} = m+1$, while $\cln{n-j}{q}$ is equal to $m$ if $j \geq d$ and $m+1$ if $j<d$.

Thus, we have $\sum_{i=0}^{q-1}\cln{n-i-\sum_{j=1}^{q-1}\cln{n-j}{q}}{q} = \sum_{i=0}^{q-1}\cln{mq+d-i-m(q-1)-\sum_{j=1}^{q-1}[[j<d]]}{q}$, where we use the usual Iversonian notation, $[[P]] = 1$ if $P$ is true and $0$ if $P$ is false. Simplifying the latter sum, we get $\sum_{i=0}^{q-1}\cln{m+d-i-(d-1)}{q} = \sum_{i=0}^{q-1}\cln{m+1-i}{q} = \sum_{i=0}^{q-1}[[i<m+1]] = m+1$. This concludes the proof.
\end{proof}

By Theorem \ref{thm:FormalGeneration} we conclude that the recursion $\langle 0;1,2,\ldots,q-1:1;1,2,\ldots,q-1:\ldots q-1;1,2,\ldots,q-1| 0 \rangle$ with sufficiently many initial conditions generates $\cln{n}{q}$ as its unique solution.

In fact, we have considerable empirical evidence to suggest that $\cln{n}{q}$ solves many other such recursions. From this evidence we conjecture the following:

\begin{conjecture}
If $\cln{n}{q}$ satisfies the recursion $\langle s_1;a_{11},a_{12},\ldots,a_{1p_1} : s_2;a_{21},a_{22},\ldots,a_{2p_2} :
  \cdots :s_k;a_{k1},a_{k2},\ldots,a_{kp_k}| 0 \rangle$ with $k=q$ and $p_i=q-1$, then $q$ does not divide any $a_{ij}$ and $q\sum_{i=1}^k s_i =\sum_{i=1}^k \sum_{j=1}^{p_i} a_{ij}$.
\end{conjecture}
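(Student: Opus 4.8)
The plan is to exploit the exact remainder form of the ceiling function. Here $\tfrac{r}{q}=\tfrac{k-1}{\sum_i p_i}=\tfrac{q-1}{q(q-1)}$, so $r=1$; write $\rho(x):=q\cln{x}{q}-x\in\{0,1,\ldots,q-1\}$, so that $q\cln{x}{q}=x+\rho(x)$ and $\rho$ has period $q$. Set $A_i(n):=n-s_i-\sum_{j=1}^{q-1}\cln{n-a_{ij}}{q}$; by Theorem~\ref{thm:nIsBounded} the hypothesis that $\cln{n}{q}$ satisfies the recursion (with $\nu=0$) gives $\cln{n}{q}=\sum_{i=1}^{q}\cln{A_i(n)}{q}$ for all integers $n$. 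Multiplying by $q$, expanding each $\cln{n-a_{ij}}{q}=\tfrac1q\bigl((n-a_{ij})+\rho(n-a_{ij})\bigr)$, and using that the number of pairs $(i,j)$ equals $\sum_i p_i=q(q-1)$, I obtain the master identity
\[
\rho(n)+\frac{1}{q}\sum_{i,j}\rho(n-a_{ij})=\left(\frac{1}{q}\sum_{i,j}a_{ij}-\sum_{i}s_i\right)+\sum_{i=1}^{q}\rho\bigl(A_i(n)\bigr),
\]
valid for every $n$.

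The structural fact that drives everything is $A_i(n+q)=A_i(n)+1$: since $p_i=q-1$, $A_i(n+q)-A_i(n)=q-\sum_{j=1}^{q-1}\bigl(\cln{n+q-a_{ij}}{q}-\cln{n-a_{ij}}{q}\bigr)=q-(q-1)=1$. Hence, as $n$ runs through any $q^2$ consecutive integers, $A_i(n)\bmod q$ hits each residue exactly $q$ times (split the range into the $q$ residue classes of $n$ modulo $q$; on each, the $q$ values of $A_i$ are consecutive integers, a complete residue system). To get the equality $q\sum_{i=1}^{k}s_i=\sum_{i,j}a_{ij}$, I would sum the master identity over $n=1,\ldots,q^2$: on the left $\sum_n\rho(n)=q\cdot\tfrac{q(q-1)}{2}$ and each $\sum_n\rho(n-a_{ij})=q\cdot\tfrac{q(q-1)}{2}$, and on the right $\sum_n\rho(A_i(n))=q\cdot\tfrac{q(q-1)}{2}$ by the equidistribution just noted; the left total and the $\rho(A_i)$ part of the right total cancel, leaving $0=q^2\bigl(\tfrac1q\sum_{i,j}a_{ij}-\sum_i s_i\bigr)$, which is exactly the claimed equality. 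Substituting it back collapses the master identity to
\[
\rho(n)+\frac{1}{q}\sum_{i,j}\rho(n-a_{ij})=\sum_{i=1}^{q}\rho\bigl(A_i(n)\bigr)\qquad\text{for every }n;
\]
in particular $q$ divides $\sum_{i,j}\rho(n-a_{ij})$ for all $n$.

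For the remaining assertion I would argue by contradiction. Suppose $q\mid a_{i_0j_0}$; by equivalence (i), which preserves $h$ (Theorem~\ref{thm:invarianceOfH}), we may assume $a_{i_0j_0}=0$, so $A_{i_0}(n)=n-s_{i_0}-\cln{n}{q}-\sum_{j\ne j_0}\cln{n-a_{i_0j}}{q}$ and the pair $(i_0,j_0)$ contributes the term $\rho(n)$ to $\sum_{i,j}\rho(n-a_{ij})$. Taking the first difference of the collapsed identity and using $\rho(m)-\rho(m-1)=q[[m\equiv1\pmod q]]-1$, one finds that at every $n\equiv1\pmod q$
\[
\sum_{i=1}^{q}\bigl(\rho(A_i(n))-\rho(A_i(n-1))\bigr)=N(0),
\]
where $N(0):=\#\{(i,j):q\mid a_{ij}\}\ge 1$, while at the same $n$ one has $A_i(n)-A_i(n-1)=1-c_i(n)$ with $\sum_i c_i(n)=N(0)$, so at least one argument $A_i$ fails to increase there. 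The goal is then to propagate this across a full residue cycle $n\equiv1,2,\ldots,q\pmod q$ and extract a contradiction with either the forced range $\tfrac1q\sum_{i,j}\rho(n-a_{ij})\in[0,(q-1)^2]$ or the exact value of $\sum_i\rho(A_i(n))$ dictated by the collapsed identity. I expect this to be the main obstacle: the displayed constraints are so far only a consistency condition, and upgrading them to a genuine contradiction will likely require a cleverly weighted summation of the identity rather than the plain sum used above---or, failing a clean argument, a reduction to a finite verification, which is available because Theorem~\ref{thm:boundedParameters} confines every $s_i$ and $a_{ij}$ to $[0,q)$.
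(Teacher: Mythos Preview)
The statement you attack is a \emph{Conjecture} in the paper; the authors give no proof, only the empirical evidence from Table~\ref{tab:ClnFunSolutions}. There is therefore nothing to compare your argument against.

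On its merits: your proof of the identity $q\sum_{i} s_i=\sum_{i,j}a_{ij}$ is correct and complete. The master identity is derived correctly (expand $q\cln{x}{q}=x+\rho(x)$ and use $\sum_i p_i=q(q-1)$), the key structural fact $A_i(n+q)=A_i(n)+1$ holds because each $p_i=q-1$, and your equidistribution argument for $A_i(n)\bmod q$ over a block of $q^2$ consecutive integers is valid. Summing then kills all the $\rho$ terms and forces the constant $\tfrac1q\sum_{i,j}a_{ij}-\sum_i s_i$ to vanish. This half of the conjecture you have actually settled, which goes beyond what the paper establishes.

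The other half---that $q\nmid a_{ij}$ for every $i,j$---remains a genuine gap, as you yourself acknowledge. Your first-difference computation and the count $N(0)$ are fine, but the paragraph ends with ``propagate this across a full residue cycle and extract a contradiction,'' and neither of the two candidate contradictions you name is actually derived; the phrase ``a cleverly weighted summation'' is a hope, not an argument. The proposed fallback to Theorem~\ref{thm:boundedParameters} does not help either: it bounds the parameter space for each \emph{fixed} $q$, but the conjecture quantifies over all $q$, so a finite check per $q$ is not a proof. As written, only the linear relation among the parameters is proved; the non-divisibility claim is still open.
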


Note that we required $\nu=0$ in the above conjecture. However, by the second equivalence class relation we can always find an equivalence class representative with $\nu=0$ since $r=1$.

Our computer verification confirms that none of the ceiling functions $\cln{2n}{3}, \cln{3n}{4}, \cln{3n}{5}$ or $\cln{4n}{5}$ can occur as the solution to any recursion of the form (\ref{eq:generalRecurrence})\footnote{Note that $\cln{3n}{5}$ has no possible choice of $k$ and $p_i$, so does not appear on the table. The computer verification for $\cln{2n}{5}$ with $k=5$ and all the $p_i=2$ takes too long to be practical. A partial search suggests that no recursions solved by $\cln{2n}{5}$ exist.}. We have not checked other ceiling functions with larger denominators because of the amount of computing power required.\footnote{For fixed values of $k$ and $p_i$, our algorithm must consider $\flr{q^2}{r} r^k q^{k-2 + \sum_{i=1}^k p_i}$ vectors $  y \in \mathbb{Z}^{k+\sum_{i=1}^{k} p_{i}}$, and up to $q^2$ values must be computed for each of these.} Based on this preliminary finding we conjecture the following:

\begin{conjecture}
\label{conj:roverq}
If $\cln{rn}{q}$ is the solution generated by a recursion of the form (\ref{eq:generalRecurrence}), then $r=1$.
\end{conjecture}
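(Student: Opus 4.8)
\textbf{Proof proposal for Conjecture \ref{conj:roverq}.}

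The plan is to argue by contradiction: assume $\cln{rn}{q}$ is generated as the unique solution of some recursion of the form (\ref{eq:generalRecurrence}) with $r>1$ and $\gcd(r,q)=1$. The starting point is the necessary condition $r/q = (k-1)/\sum_{i=1}^k p_i$ together with Theorem \ref{thm:FormalGeneration}, which forces $r p_i/q \le 1$, i.e.\ $p_i \le q/r$, for every $i$. Combining these two facts bounds $k$ from above (since $\sum p_i = q(k-1)/r$ and each $p_i \ge 1$ gives $k \le q(k-1)/r$, hence $k \le q/(q-r) \le q$), so there are only finitely many admissible shape vectors $(k;p_1,\dots,p_k)$; the real content must come from ruling out every choice of $s_i$ and $a_{ij}$ for each such shape. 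By Theorem \ref{thm:boundedParameters} we may take all $s_i, a_{ij} \in [0,q)$, and by the remark on $\nu$ the constant $\nu$ is then determined. So the task reduces to showing that for each admissible $(k;p_1,\dots,p_k)$ and each choice of residues $s_i, a_{ij} \in \{0,1,\dots,q-1\}$, the difference function $h(n,y)$ of Theorem \ref{thm:invarianceOfH} is nonzero for some $n$ with $0 < n \le q^2$.

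The approach I would take is to analyze $h(n,y)$ via a careful accounting of the fractional parts. Writing $n = mq + d$ with $1 \le d \le q$, one has $\cln{rn}{q} = mr + \cln{rd}{q}$, so $\cln{r(n-a_{ij})}{q} = mr + \cln{r(d-a_{ij})}{q}$, and the key observation is that $\cln{r(d-a_{ij})}{q}$ depends only on the residue of $d$ mod $q$, not on $m$. Therefore the inner sum $s_i + \sum_{j=1}^{p_i}\cln{r(n-a_{ij})}{q}$ equals $p_i m r + (\text{a quantity }\sigma_i(d)\text{ depending only on }d)$, and since $p_i r \equiv $ something fixed mod $q$, the outer argument $n - s_i - \sum_j \cln{r(n-a_{ij})}{q}$ has a predictable residue mod $q$. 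Plugging back in, $h(mq+d, y)$ should be expressible, for each fixed $d$, as a \emph{linear} (actually affine-with-floor) function of $m$ with a leading coefficient that is an integer combination $1 - \sum_i (\text{something})$ — and the necessary condition on $r/q$ is precisely what makes the $m$-linear part vanish. What remains is a purely residue-level identity that must hold for all $d \in \{1,\dots,q\}$ simultaneously; I would try to show this system is \emph{unsatisfiable} when $r > 1$ by exhibiting, for each shape, at least one residue $d$ where the required ceiling identity fails for every choice of $s_i, a_{ij}$.

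The main obstacle — and the reason this is only a conjecture — is exactly that last step: proving the residue-level system has no solution for every admissible shape and every $r > 1$ with $\gcd(r,q)=1$. Unlike the $r=1$ case (Theorem 3.1), where the ceiling function is ``slowly varying'' and telescoping arguments close easily, for $r > 1$ the quantity $\cln{rd}{q}$ jumps by $2$ or more at certain steps and the combinatorics of which $j$'s contribute a carry becomes delicate; the clean telescoping of the $f(n+1)=f(n)$ type in Theorem \ref{thm:extraq} no longer applies. A natural sub-approach would be to fix $q$ and do the finite check by hand for small $q$ (the excerpt already reports the machine verification for $q \le 5$), then look for an invariant — perhaps a weighted count modulo $r$, or an averaging identity $\sum_{d=1}^{q} h(mq+d,y)$ that must equal zero but provably cannot — that generalizes uniformly in $q$. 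Identifying such an invariant is where I expect the difficulty to lie, and failing that, one would fall back on a shape-by-shape elimination exploiting the constraint $p_i \le q/r$ to keep the case analysis finite. A complete proof likely requires a genuinely new structural idea about how integer linear combinations of the sequences $(\cln{r(n-a)}{q})_n$ can reconstruct $\cln{rn}{q}$, and I would not expect the routine bounding arguments above to suffice on their own.
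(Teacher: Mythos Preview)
The paper does not prove this statement: it is stated as a \emph{conjecture}, supported only by the computer verification described in Section~\ref{sec:Empirical} for a handful of small values of $r/q$ (namely $2/3$, $3/4$, $3/5$, $4/5$, and partially $2/5$). There is therefore no proof in the paper to compare your proposal against.

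Your write-up is an honest outline of a possible attack rather than a proof, and you flag this yourself. The reductions you sketch --- finitely many shapes $(k;p_1,\dots,p_k)$ via $p_i\le q/r$ and the limit identity, finitely many residue classes for $s_i,a_{ij}$ via Theorem~\ref{thm:boundedParameters}, and the periodicity in $n$ via Theorem~\ref{thm:nIsBounded} --- are exactly the reductions the paper establishes and uses for its computer search; they do not by themselves yield the conjecture. The substantive gap, as you correctly identify, is the residue-level unsatisfiability claim for every admissible shape and every $r>1$, for which neither you nor the paper supplies an argument. So your proposal is not a proof, and the paper does not claim one either.
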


Note that if the above conjecture is true, then the variable $\nu$ is superfluous, since we could always find a equivalence class representative with $\nu=0$.

\section{Conway Connections} \label{sec:Conway}
Ceiling functions provide an unexpected avenue for identifying a novel connection between generalized Conolly recursions of the form (\ref{eq:generalRecurrence}) and a natural generalization of the famous Conway-Hofstadter sequence.  Recall that the Conway sequence $C(n)$ \cite{Conway, KuboVakil, Pelesko}, is defined by the nested recursion $C(n) = C(C(n-1))+C(n-C(n-1))$, with $C(1)=C(2)=1$. Analogous to (\ref{eq:generalRecurrence}), which generalizes the Conolly recursion \cite{ConVa}, the most general form of the Conway recursion is

\begin{align}
R(n) = \sum_{i=1}^{k_1}R(n-s_i-\sum_{j=1}^{p_i}R(n-a_{ij}))+\sum_{i=1}^{k_2}R(-t_i+\sum_{j=1}^{q_i}R(n-b_{ij}))+\nu,
\label{eq:ConwayTerms}
\end{align}
where if $k_1, k_2, q_i,$, or $p_i = 0$, then we take the corresponding sum to be the empty sum (ie, 0). 

A priori there is no reason to believe that there ought to be any connection between these two families of recursions. However, through experimental evidence, we observed that in many cases recursions of the form (\ref{eq:ConwayTerms}) have ceiling function solutions just like those of the form (\ref{eq:generalRecurrence}). In what follows, we provide the first known results linking the solutions of these two families of recursions by establishing an equivalence between them in certain circumstances.

To begin, we introduce some terminology and notation.
We say a \textit{Conway term} of a recursion is one of the form $R(-a+R(n-b))$, and a \textit{Conolly term} is one of the form $R(n-a-R(n-b))$. As it turns out, for the purpose of ceiling function solutions of the form $\cln{n}{q}$, certain Conway terms and Conolly terms have a reciprocal relationship wherein one can be replaced by the other for the purposes of a given ceiling function solution. This would not work for ceiling function solutions of the form $\cln{rn}{q}$ with $r>1$ (although we don't believe that any such solutions exist; see Conjecture \ref{conj:roverq}).

\iffalse
Thus, the more general class of recursions with both Conway and Conolly terms would be
$R(n) = \sum_{i=1}^{k_1}R(n-s_i-\sum_{j=1}^{p_i}R(n-a_{ij}))+\sum_{i=1}^{k_2}R(t_i+\sum_{j=1}^{q_i}R(n-b_{ij}))+\nu$.
\fi

\begin{theorem}
Let $I$ be a subset of $\{0,1,...,q-1\}$. The ceiling function $\cln{n}{q}$ formally satisfies the recursion $R(n) = \sum_{i=1}^{k_1}R(n-s_i-\sum_{j=1}^{p_i}R(n-a_{ij}))+\sum_{i=1}^{k_2}R(-t_i+\sum_{j=1}^{q_i}R(n-b_{ij}))+R(n-a-\sum_{j \in I}R(n-j))+\nu$ if and only if it formally satisfies the reciprocal recursion $R(n) = \sum_{i=1}^{k_1}R(n-s_i-\sum_{j=1}^{p_i}R(n-a_{ij}))+\sum_{i=1}^{k_2}R(-t_i+\sum_{j=1}^{q_i}R(n-b_{ij}))+R(-a+\sum_{j \notin I}R(n-j))+\nu$.
\end{theorem}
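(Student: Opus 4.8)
The plan is to reduce the claimed equivalence to a single pointwise identity about the ceiling function, and then invoke Theorem~\ref{thm:nIsBounded} implicitly by noting that only a bounded calculation is needed. Observe that the two recursions differ in exactly one summand: one has a Conolly term $R(n-a-\sum_{j \in I}R(n-j))$, the other a Conway term $R(-a+\sum_{j \notin I}R(n-j))$, with all other terms identical. Hence $\cln{n}{q}$ formally satisfies one recursion if and only if it satisfies the other, provided we can show that, when we substitute $\cln{n}{q}$ for $R$, these two differing summands are \emph{equal} for every $n$. So the entire theorem collapses to proving
\[
\cln{n-a-\sum_{j \in I}\cln{n-j}{q}}{q} \;=\; \cln{-a+\sum_{j \notin I}\cln{n-j}{q}}{q}
\quad\text{for all } n.
\]

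First I would establish the key arithmetic fact that $\sum_{j=0}^{q-1}\cln{n-j}{q} = n$ for all integers $n$. This is the same bookkeeping used in the proof of Theorem~\ref{thm:extraq}: the function $g(n) := \sum_{j=0}^{q-1}\cln{n-j}{q} - n$ satisfies $g(n+1) = g(n)$ because exactly one of the $q$ consecutive residues $n-j$, $0 \le j \le q-1$, crosses a multiple of $q$ when $n$ increments, so $\sum_j \cln{n-j}{q}$ goes up by $1$, matching the increment of $n$; and a direct check at one value (e.g.\ $n=q$, where the sum is $1+1+\cdots+1 = q$) gives $g \equiv 0$. Since $I$ and its complement in $\{0,1,\ldots,q-1\}$ partition the index set, this yields $\sum_{j \in I}\cln{n-j}{q} + \sum_{j \notin I}\cln{n-j}{q} = n$, i.e.\ $\sum_{j \notin I}\cln{n-j}{q} = n - \sum_{j \in I}\cln{n-j}{q}$.

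Then the right-hand side of the desired identity becomes $\cln{-a + n - \sum_{j \in I}\cln{n-j}{q}}{q}$, which is literally the left-hand side $\cln{n - a - \sum_{j \in I}\cln{n-j}{q}}{q}$. So the two summands coincide for every $n$, and substituting $\cln{n}{q}$ into both recursions produces identical equations; thus $h(n,\cdot)=0$ for one recursion exactly when it holds for the other, giving the ``if and only if.''

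I do not anticipate a serious obstacle here: the only content is the telescoping identity $\sum_{j=0}^{q-1}\cln{n-j}{q}=n$, and the bulk of the proof is noticing that the Conway/Conolly reciprocity is precisely the statement that one exponent-argument is obtained from the other by subtracting from $n$. The one point requiring a line of care is confirming that when $R$ is replaced by a non-$R$ function the notion of ``formal satisfaction'' still makes sense and is preserved term-by-term — but this is immediate from the definition of formal satisfaction given in the text (substitute the sequence on both sides and check equality), since equal substituted summands contribute equally to both sides.
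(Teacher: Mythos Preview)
Your proof is correct and follows essentially the same approach as the paper: both reduce to the identity $n=\sum_{j=0}^{q-1}\cln{n-j}{q}$ (which the paper cites from Theorem~\ref{thm:extraq}) to conclude that the arguments of the Conolly and Conway terms coincide after substituting $\cln{n}{q}$. Your write-up simply supplies more detail on that identity; the mention of Theorem~\ref{thm:nIsBounded} is unnecessary since you establish the equality for all $n$ directly.
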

\begin{proof}
Observe that as in the proof of Theorem \ref{thm:extraq}, $n = \sum_{j=0}^{q-1}\cln{n-j}{q}$. This shows that the argument of the $(k_1+k_2+1)^{st}$ term is the same in both recursions.
\end{proof}

By using the equivalence relations (i) and (ii) introduced in Section \ref{sec:Alg}, we can always transform a Conolly term so that all the $a_{ij}$ are in the range $0 \le a_{ij} <q$. In order to apply the above theorem, it is necessary that for the index $i$ corresponding to the summand that we wish to transform, we have all of the $a_{ij}$ distinct modulo $q$. Likewise, we could easily derive equivalence relations for the Conway terms that allow us to do the same thing (they would be similar to the equivalence relations in Section \ref{sec:Alg} for the Conolly terms, differing only by a sign change in relation (i)). These would allow us to transform a Conway term into the corresponding Conolly term, provided that for the chosen index $i$, the $b_{ij}$ are all distinct modulo $q$.

For example, this means that the recursion $R(n) = R(n-R(n-1))+R(n-1-R(n-1))$, which is solved by $\cln{n}{2}$, can have either or both Conolly terms replaced by a Conway term. Thus, $C(n) = C(n-C(n-1)) + C(-1+C(n))$ would also be solved (formally) by $\cln{n}{2}$. Applying an equivalence relation to the second term, we get that $\cln{n}{2}$ solves the recursion $C(n)=C(n-C(n-1)) + C(C(n-2))$.

\section{Conclusions and Future Directions} \label{sec:Conc}

Several directions exist for future research on this topic. The first would be to follow up on the two conjectures in Section \ref{sec:Empirical}, as well as to further explore patterns among the parameter sets for the solutions that we obtained; gathering additional empirical evidence would be a useful initial step.

Second, the link between the generalized Conolly and Conway recursions in the case of ceiling function solutions suggests that there might be other connections between these two families of recursions. For example, do the tree interpretations for solutions to the Conolly-type recursions found in \cite{JacksonRuskey, Rpaper, ConollyLike} have any analogue for Conway-type recursions?

Third, the Conway connection we found handles all ceiling function solutions to the Conway generalization provided that the $b_{ij}$ in each Conway term are distinct mod q. When this is not the case, an approach similar to the one we used in Section \ref{sec:Empirical} for the generalized Conolly recursion (\ref{eq:generalRecurrence}) might provide interesting results.

Finally, ceiling and floor functions appear frequently in the solutions of a wide variety of nested recursions, and not always in the form $\cln{n}{q}$ that we found above. For example, the Golomb recursion \cite{Golomb1990} $G(n)=G(n-G(n-1))+1$, with $G(1)=1$, has the closed form solution $\lfloor\frac{\lfloor\sqrt{8n}\rfloor+1}{2}\rfloor$. The function $\lfloor n\alpha^{(k)}\rfloor$, where $\alpha^{(k)}=\frac{2-k+\sqrt{k^2+4}}{2}$, formally satisfies another recursion defined by Golomb \cite{Golomb1990}, namely,  $2b(n)+kn=b(b(n)+kn)$; see also \cite{BCT}. Does this suggest that ceiling and floor functions may be a unifying theme in the study of nested recursions? Can any of the techniques in this paper be generalized to a broader class of ceiling functions?

\end{document}